\newtheorem{thm}{Theorem}
\newtheorem{cor}{Corollary}
\newtheorem{rem}{Remark}
\newtheorem{lem}{Lemma}
\def\cal{\mathcal}
\newcommand{\N}{\mathbb{N}}
\newcommand{\Z}{\mathbb{Z}}
\newcommand{\Q}{\mathbb{Q}}
\newcommand{\F}{\mathbb{F}}
\newcommand{\bs}{\backslash}
\title{A note on $p$-adic denseness of quotients of values of quadratic forms}
\author{Piotr Miska}
\address{Institute of Mathematics\\ Jagiellonian University in Krak\'{o}w\\ Krak\'{o}w, Poland}
\email{piotr.miska@uj.edu.pl}
\subjclass[2010]{11A07, 11B05, 11E08}
\keywords{denseness, $p$-adic topology, quadratic form, quotient set, ratio set}
\thanks{{The research of the author is supported by the grants of the Polish National Science Centre no. UMO-2018/29/N/ST1/00470 and the scholarship START 2019 of the Foundation for Polish Science.}}
\begin{document}

\maketitle

\begin{abstract}
Donnay, Garcia and Rouse in \cite{DonGarRou} classified nonsingular quadratic forms $Q$ with integral coefficients and prime numbers $p$ such that the set of quotients of values of $Q$ attained for integer arguments is dense in the field of $p$-adic numbers. The aim of this note is to give another proof of this classification.
\end{abstract}

\section{Introduction}

Let $\N, \N_+, \Z, \Q$ denote the sets of nonnegative integers, positive integers, integers and rational numbers, respectively.

For each prime number $p$ and a nonzero rational number $x$ we define the $p$-adic valuation of $x$ as the integer $t$ such that $x=\frac{a}{b}p^t$ for some integers $a,b$ not divisible by $p$. We denote $p$-adic valuation of $x$ by $\nu_p(x)$. For $x=0$ we set $\nu_p(0)=+\infty$. Next, we define $p$-adic norm on the field of rational numbers by the formula
$$||x||_p=\begin{cases}
p^{-\nu_p(x)}&\mbox{ for }x\neq 0\\
0&\mbox{ for }x=0
\end{cases}.$$
The $p$-adic norm induces a $p$-adic metric $d_p$ on $\Q$ by the formula $d_p(x,y)=||x-y||_p$. The field endowed with $p$-adic metric is not a complete metric space. Thus, we define its completion, which has a structure of a topological field and we call this completion the field of $p$-adic numbers $\Q_p$. The notions of $p$-adic valuation, $p$-adic norm and $p$-adic metric extend to the field $\Q_p$ by continuity. We denote the multiplicative group of the field of $p$-adic numbers by $\Q_p^*$.

If $A$ is a set and $n\in\N_+$, then when writing $A^n$ we mean the cartesian product of $n$ copies of the set $A$. The exception is the notation $(\Q_p^*)^2$ that means the multiplicative group of all nonzero squares in $\Q_p$.

A quadratic form over a given integral ring $\cal{R}$ is a homogenious polynomial of the second degree with coefficients in $\cal{R}$. If $Q=\sum_{1\leq i,j\leq n}a_{ij}X_iX_j\in\cal{R}[X_1,...,X_n]$, where $a_{ij}=a_{ji}$ for each $(i,j)\in\{1,...,n\}^2$ and not all $a_{ij}$ are zero, then we will say that the quadratic form $Q$ is nonsingular if $\det[a_{ij}]_{1\leq i,j\leq n}\neq 0$. If $Q$ is a quadratic form over $\cal{R}$ and there exists a nonzero vector $\mathbf{x}\in\cal{R}^n$ such that $Q(\mathbf{x})=0$, then we say that $Q$ is isotropic. Otherwise we say that $Q$ is anisotropic. The quadratic form $Q$ represents some element $a\in\cal{R}$ over $\cal{R}$ if $Q(\mathbf{x})=a$ for some $\mathbf{x}\in\cal{R}^n$. More general, we say that $Q$ represents some subset $A$ of $\cal{R}$ over $\cal{R}$ if $Q$ represents every element of $A$ over $\cal{R}$.

If $A$ is a subset of a given field, then we define its quotient set as
$$R(A)=\left\{\frac{a}{b}: a,b\in A, b\neq 0\right\}.$$

The subject of denseness of quotient sets of subsets of $\N$ in the set of positive real numbers has been widely studied for decades and generated a lot of literture (see \cite{BDGLS, BEST, BukSalToth, BukCsi, BukToth, GPS-JS, HedRose, HobSil, Sal, Sal2, Sta, StraToth, StraToth2}). On the other hand, the study of denseness of subsets of $\N$ or $\Z$ in the field of $p$-adic numbers is new. It was initiated in \cite{GarLuc}. A short time later there appeared \cite{San} and, probably the most comprehensive publication devoted to this issue, namely \cite{GHLPSSS}. The authors of \cite{GHLPSSS} presented a wide variety of interesting results and left several open problems. In the view of their results on $p$-adic denseness of quotients of sums of a given number of squares or cubes, they posed two problems. The first one \cite[Problem 4.3]{GHLPSSS} concerned $p$-adic denseness of quotients of sums of a given number of powers with a fixed exponent greater than $3$. This problem was completely solved in \cite{MisMurSan}. The second one \cite[Problem 4.4]{GHLPSSS} was devoted to quotients of values of quadratic forms. It was fully solved in \cite{DonGarRou}. Furthermore, the authors of \cite{DonGarRou} gave two proofs of classification of nonsingular quadratic forms $Q$ with integral coefficients and prime numbers $p$ such that the set of quotients of values of $Q$ attained for integer arguments is dense in the field of $p$-adic numbers. The first proof is longer but elementary. The second one is much shorter but requires the results from \cite{Ser} on quadratic classes in $\Q_p$ represented by a given quadratic form.

The goal of this paper is to present another, even shorter proof of characterization of pairs $(Q,p)$, where $Q\in\Z[X_1,...,X_n]$ is a nonsingular quadratic form and $p$ is a prime number such that $R(Q(\Z^n))$ is dense in $\Q_p$. This proof is also based on the results from \cite{Ser} on quadratic classes in $\Q_p$ represented by a given quadratic form.

Before the proof, let us recall the mentioned characterization. Its statement is the following.

\begin{thm}\label{main}
Let $p$ be a prime number, $n$ be a positive integer and $Q\in\Z[X_1,...,X_n]$ be a nonsingular quadratic form. Then the set $R(Q(\Z^n))$ is dense in $\Q_p$ if and only if $n\geq 3$ or $Q=aX_1^2+bX_1X_2+cX_2^2$ such that $2\mid\nu_p(b^2-4ac)$ and
\begin{itemize}
\item $\left(\frac{p^{-\nu_p(b^2-4ac)}(b^2-4ac)}{p}\right)=1$ for $p>2$,
\item $p^{-\nu_p(b^2-4ac)}(b^2-4ac)\equiv 1\pmod{8}$ for $p=2$.
\end{itemize}.
\end{thm}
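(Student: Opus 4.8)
The plan is to reduce the statement to a purely group-theoretic condition on the square classes of $\Q_p^*$ represented by $Q$, and then read off the answer from the classification of represented classes in \cite{Ser}.

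\smallskip
\noindent\emph{Reduction.} Because $Q$ is homogeneous of degree $2$, the set $R(Q(\Z^n))$ is invariant under multiplication by $(\Q^*)^2$, since $\frac{Q(\mathbf{x})}{Q(\mathbf{y})}\left(\frac{r}{s}\right)^2=\frac{Q(r\mathbf{x})}{Q(s\mathbf{y})}$. As $\Q^*$ is dense in $\Q_p^*$, the set $(\Q^*)^2$ is dense in $(\Q_p^*)^2$, so the closure $\overline{R(Q(\Z^n))}$ in $\Q_p$ is invariant under $(\Q_p^*)^2$, hence a union of square classes of $\Q_p^*$ together with $0$ (note $0=Q(\mathbf{0})/Q(\mathbf{y})\in R(Q(\Z^n))$ for any $\mathbf{y}$ with $Q(\mathbf{y})\neq 0$, which exists because $Q$ is nonsingular). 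Since the square classes of $\Q_p^*$ are open and their union with $\{0\}$ is all of $\Q_p$, this shows that $R(Q(\Z^n))$ is dense in $\Q_p$ if and only if it meets every square class of $\Q_p^*$. Write $G=\Q_p^*/(\Q_p^*)^2$ and let $\mathcal{C}\subseteq G$ be the set of classes of the nonzero integers in $Q(\Z^n)$. As $G$ is an elementary abelian $2$-group, the class of $Q(\mathbf{x})/Q(\mathbf{y})$ equals $[Q(\mathbf{x})][Q(\mathbf{y})]$, so $R(Q(\Z^n))$ meets exactly the classes in $\mathcal{C}\cdot\mathcal{C}:=\{c_1c_2:c_1,c_2\in\mathcal{C}\}$; hence density is equivalent to $\mathcal{C}\cdot\mathcal{C}=G$. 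Finally, $\mathcal{C}$ coincides with the set $\mathcal{C}_p$ of classes represented by $Q$ over $\Q_p$: the inclusion $\mathcal{C}\subseteq\mathcal{C}_p$ is clear, and conversely if $Q(\mathbf{x}_0)=a\in\Q_p^*$ with $\mathbf{x}_0\in\Q_p^n$, choose $k\in\N$ with $p^k\mathbf{x}_0\in\Z_p^n$, so that $Q(p^k\mathbf{x}_0)=p^{2k}a$; since $\Z^n$ is dense in $\Z_p^n$ and $Q$ is continuous, some value $Q(\mathbf{z})$ with $\mathbf{z}\in\Z^n$ lies close enough to $p^{2k}a\neq 0$ to belong to the same (open) square class, whence $[a]\in\mathcal{C}$.

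\smallskip
\noindent\emph{Rank analysis.} The form $Q$ is nonsingular of rank $n$ over $\Q_p$ (its determinant is a nonzero integer), so by \cite{Ser}: $\mathcal{C}_p=G$ when $n\geq 4$; $\mathcal{C}_p$ contains all but at most one class of $G$ when $n=3$; when $n=2$, $\mathcal{C}_p=G$ if $Q$ is isotropic over $\Q_p$ and $\mathcal{C}_p$ is a coset of an index-$2$ subgroup of $G$ otherwise; and $\mathcal{C}_p$ is a single class when $n=1$. Consequently, for $n\geq 4$ we get $\mathcal{C}\cdot\mathcal{C}=G$. For $n=3$, since $|G|\in\{4,8\}$ we have $|\mathcal{C}_p|\geq|G|-1>|G|/2$, and in a finite abelian $2$-group any subset $A$ with $|A|>|G|/2$ satisfies $A\cdot A=G$ (for $g\in G$ the sets $A$ and $gA$ each have more than $|G|/2$ elements, hence intersect, giving $g\in A\cdot A$); so again $\mathcal{C}\cdot\mathcal{C}=G$. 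For $n=1$, $\mathcal{C}\cdot\mathcal{C}\subseteq\mathcal{C}_p\cdot\mathcal{C}_p=\{1\}\neq G$. For $n=2$: if $Q$ is isotropic over $\Q_p$ then $\mathcal{C}\cdot\mathcal{C}=G$, while if it is not, then $\mathcal{C}\cdot\mathcal{C}\subseteq\mathcal{C}_p\cdot\mathcal{C}_p$, and a product of two elements of a single coset of an index-$2$ subgroup $H$ always lies in $H$, so $\mathcal{C}\cdot\mathcal{C}\subseteq H\neq G$. Thus $R(Q(\Z^n))$ is dense in $\Q_p$ precisely when $n\geq 3$, or $n=2$ and $Q$ is isotropic over $\Q_p$.

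\smallskip
\noindent\emph{The binary case.} For $n=2$ write $Q=aX_1^2+bX_1X_2+cX_2^2$, nonsingular, so $\Delta:=b^2-4ac\neq 0$. Diagonalizing over $\Q_p$ — if $a\neq 0$ then $Q\cong\langle a,-a\Delta\rangle$, and if $a=0$ then $b\neq 0$, $\Delta=b^2$ and $Q=X_2(bX_1+cX_2)$ is visibly isotropic — one sees that $Q$ is isotropic over $\Q_p$ if and only if $\Delta\in(\Q_p^*)^2$. A nonzero integer $\Delta$ lies in $(\Q_p^*)^2$ exactly when $\nu_p(\Delta)$ is even and the unit part $p^{-\nu_p(\Delta)}\Delta$ is a square in $\Z_p^*$, i.e. $\left(\frac{p^{-\nu_p(\Delta)}\Delta}{p}\right)=1$ for $p>2$ and $p^{-\nu_p(\Delta)}\Delta\equiv 1\pmod 8$ for $p=2$. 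Combined with the rank analysis this is exactly the condition in the statement, which completes the proof. The step I expect to require the most care is the identification $\mathcal{C}=\mathcal{C}_p$ together with pinning down from \cite{Ser} the precise size of $\mathcal{C}_p$ in each rank — in particular the fact that a rank-$3$ form over $\Q_p$ omits at most one square class, which is what allows the crude counting argument to settle the case $n=3$; the remaining steps are routine.
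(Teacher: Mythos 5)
Your proposal is correct and follows essentially the same route as the paper: reduce denseness of $R(Q(\Z^n))$ to the condition that the set $\mathcal{C}$ of square classes represented by $Q$ satisfies $\mathcal{C}\cdot\mathcal{C}=\Q_p^*/(\Q_p^*)^2$, invoke Serre's description of the classes represented by a nondegenerate form of each rank, settle $n\geq 3$ by the more-than-half pigeonhole argument and $n=2$ by the criterion that a nonsingular binary form is isotropic over $\Q_p$ exactly when $b^2-4ac\in(\Q_p^*)^2$. The only notable difference is that for an anisotropic binary form you observe that the represented classes form a coset of an index-$2$ subgroup (the kernel of the Hilbert symbol $(\cdot,-d)$), which replaces the paper's explicit four-case enumeration at $p=2$ with a one-line argument.
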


\section{Proof of Theorem \ref{main}}

Let us start the proof of the theorem from the remark that denseness of the sets $R(Q(\N^n))$, $R(Q(\Z^n))$ and $R(Q(\Z_p^n))$ are equivalent.

\begin{lem}\label{1}
Let $Q$ be any quadratic form over $\Q_p$. Then the following conditions are equivalent:
\begin{enumerate}
\item[i)] $R(Q(\N^n))$ is dense in $\Q_p$,
\item[ii)] $R(Q(\Z^n))$ is dense in $\Q_p$,
\item[iii)] $R(Q(\Z_p^n))$ is dense in $\Q_p$.
\end{enumerate}
\end{lem}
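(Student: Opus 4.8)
The plan is to prove the chain of implications (i) $\Rightarrow$ (ii) $\Rightarrow$ (iii) $\Rightarrow$ (i), where the first is trivial since $\N^n\subseteq\Z^n$, so $R(Q(\N^n))\subseteq R(Q(\Z^n))$ and denseness is inherited by supersets. For (ii) $\Rightarrow$ (iii), I would again note that $\Z^n\subseteq\Z_p^n$, hence $R(Q(\Z^n))\subseteq R(Q(\Z_p^n))$, so denseness of the smaller set forces denseness of the larger. The only implication requiring real work is (iii) $\Rightarrow$ (i).

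For (iii) $\Rightarrow$ (i), the key point is a continuity-and-density argument combined with homogeneity. First I would observe that $Q\colon\Z_p^n\to\Q_p$ is continuous (it is a polynomial map) and that $\Z^n$ is dense in $\Z_p^n$ with respect to the $p$-adic topology; therefore $Q(\Z^n)$ is dense in $Q(\Z_p^n)$, and consequently $R(Q(\Z^n))$ is dense in $R(Q(\Z_p^n))$ — one must be slightly careful about the division, but since the $p$-adic norm and division are continuous on $\Q_p^*$, quotients of $p$-adic approximants converge to the quotient, so density of ratios transfers. Hence (iii) implies that $R(Q(\Z^n))$ is dense in $\Q_p$, i.e.\ condition (ii). To upgrade from $\Z$ to $\N$, I would use homogeneity of $Q$: for any $\mathbf{x}\in\Z^n$ we have $Q(\mathbf{x})=Q(-\mathbf{x})$, and more usefully, given any target $\alpha\in\Q_p$ to approximate by a quotient $Q(\mathbf{a})/Q(\mathbf{b})$ with $\mathbf{a},\mathbf{b}\in\Z^n$, I can replace $\mathbf{a}$ by $\mathbf{a}+N\mathbf{e}$-type shifts, or more simply multiply both $\mathbf{a}$ and $\mathbf{b}$ by a large positive integer $m$ coprime to $p$: then $Q(m\mathbf{a})/Q(m\mathbf{b})=Q(\mathbf{a})/Q(\mathbf{b})$ by degree-$2$ homogeneity, and by choosing $m$ appropriately (e.g.\ adjusting each coordinate modulo a high power of $p$ using the Chinese Remainder Theorem so that it becomes nonnegative while staying in the same residue class mod $p^k$) one arranges $m\mathbf{a},m\mathbf{b}\in\N^n$ without changing the quotient or disturbing the $p$-adic approximation. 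This shows $R(Q(\N^n))$ is dense in $\Q_p$ as well.

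The main obstacle I anticipate is the bookkeeping in the $\Z\to\N$ step: one needs to turn an integer vector into a nonnegative one while (a) keeping the value of $Q$ — hence the quotient — exactly unchanged, and (b) not spoiling $p$-adic closeness to the target. Pure homogeneity rescaling by a positive integer $m$ handles (a) automatically but may not make every coordinate nonnegative if some coordinates have mixed signs; the cleanest fix is: first use density in $\Z_p^n$ (already available from (iii)) to pick approximating vectors, then use that each coordinate in $\Z_p$ can be approximated arbitrarily well by \emph{nonnegative} integers (the nonnegative integers are dense in $\Z_p$), so one may take the approximating vectors in $\N^n$ from the outset. Thus the genuinely load-bearing facts are: $\N$ is $p$-adically dense in $\Z_p$, $Q$ is continuous, and division is continuous on $\Q_p^*$; everything else is routine set inclusion.
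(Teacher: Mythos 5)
Your proposal is correct and rests on exactly the same ingredients as the paper's (one-line) proof: the density of $\N$ in $\Z_p$, the continuity of the polynomial map $Q$, and the continuity of division on $\Q_p^*$, with the remaining implications being trivial set inclusions. The detour through homogeneity rescaling and the Chinese Remainder Theorem is unnecessary, as you yourself note at the end — approximating each coordinate directly by nonnegative integers is the clean route, and it is the one the paper intends.
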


\begin{proof}
Follows from denseness of $\N$ in $\Z_p$ with respect to $p$-adic topology.
\end{proof}

\begin{lem}\label{2}
Let $Q$ be any quadratic form over $\Q_p$. Then we have $R(Q(\Q_p^n))=R(Q(\Z_p^n))$.
\end{lem}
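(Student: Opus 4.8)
The plan is to prove $R(Q(\Q_p^n))=R(Q(\Z_p^n))$ by showing that any quotient $Q(\mathbf{x})/Q(\mathbf{y})$ with $\mathbf{x},\mathbf{y}\in\Q_p^n$ and $Q(\mathbf{y})\neq 0$ can be rewritten as a quotient of values at $\Z_p$-points. The inclusion $R(Q(\Z_p^n))\subseteq R(Q(\Q_p^n))$ is trivial since $\Z_p\subseteq\Q_p$, so the work is in the reverse inclusion. The key observation is the homogeneity of degree $2$: for any $\lambda\in\Q_p$ we have $Q(\lambda\mathbf{x})=\lambda^2 Q(\mathbf{x})$.

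First I would take $\mathbf{x},\mathbf{y}\in\Q_p^n$ with $Q(\mathbf{y})\neq 0$ and choose $\lambda\in\Q_p^*$ to clear denominators: concretely, pick $\lambda=p^k$ with $k$ large enough that $\lambda\mathbf{x}\in\Z_p^n$ and $\lambda\mathbf{y}\in\Z_p^n$ simultaneously (take $k=\max\{0,-\nu_p(x_i),-\nu_p(y_j): i,j\}$). Then $\mathbf{x}':=\lambda\mathbf{x}$ and $\mathbf{y}':=\lambda\mathbf{y}$ lie in $\Z_p^n$, and by homogeneity
$$\frac{Q(\mathbf{x})}{Q(\mathbf{y})}=\frac{\lambda^{-2}Q(\mathbf{x}')}{\lambda^{-2}Q(\mathbf{y}')}=\frac{Q(\mathbf{x}')}{Q(\mathbf{y}')},$$
with $Q(\mathbf{y}')=\lambda^2 Q(\mathbf{y})\neq 0$. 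Hence $Q(\mathbf{x})/Q(\mathbf{y})\in R(Q(\Z_p^n))$, which gives $R(Q(\Q_p^n))\subseteq R(Q(\Z_p^n))$ and completes the proof.

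This argument is essentially routine; there is no real obstacle, since the only ingredient is that $\Z_p$ is the ring of integers of $\Q_p$ (so every $p$-adic number becomes integral after multiplication by a suitable power of $p$) together with degree-$2$ homogeneity. The one point to state carefully is that a single scalar $\lambda$ works for both $\mathbf{x}$ and $\mathbf{y}$ at once, so that it cancels in the ratio; using two different scalars would also work but then one must separately note that $\mu^2/\lambda^2$ is itself realized, so choosing a common $\lambda$ is cleanest. Note also that this lemma does not require $Q$ to be nonsingular, matching the generality of the statement.
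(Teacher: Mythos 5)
Your proof is correct and is essentially identical to the paper's: both scale $\mathbf{x}$ and $\mathbf{y}$ by a common power $p^k$ large enough to land in $\Z_p^n$ and then cancel the factor $p^{2k}$ coming from degree-$2$ homogeneity. No differences worth noting.
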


\begin{proof}
Let $\mathbf{x}, \mathbf{y}\in\Q_p^n$ and $Q(\mathbf{y})\neq 0$. Then, for sufficiently large $k\in\N$ we have $p^k\mathbf{x}, p^k\mathbf{y}\in\Z_p^n$ and
$$\frac{Q(p^k\mathbf{x})}{Q(p^k\mathbf{y})}=\frac{p^{2k}Q(\mathbf{x})}{p^{2k}Q(\mathbf{y})}=\frac{Q(\mathbf{x})}{Q(\mathbf{y})}.$$
\end{proof}

At this moment we split the proof into two cases depending on that whether $Q$ is isotropic or not. We start with the case of isotropic quadratic form. A direct consequence of the previous lemma is the following result.

\begin{cor}\label{cor}
If $Q$ is a nonsingular isotropic quadratic form over $\Q_p$, then $R(Q(\Z_p^n))=\Q_p$. In particular, the set $R(Q(\Z^n))$ is dense in $\Q_p$.
\end{cor}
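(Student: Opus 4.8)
The plan is to combine Lemma~\ref{2} with the classical fact that a nonsingular isotropic quadratic form over a field is \emph{universal}, i.e.\ represents every element of the field. First I would apply Lemma~\ref{2} to rewrite $R(Q(\Z_p^n)) = R(Q(\Q_p^n))$, so that it suffices to prove $Q(\Q_p^n) = \Q_p$. Granting this, $Q$ attains in particular every nonzero value, and therefore $R(Q(\Q_p^n)) = \{a/b : a\in\Q_p,\ b\in\Q_p^*\} = \Q_p$, which gives the first assertion.

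It remains to see that $Q$ is universal over $\Q_p$. One option is to quote the corresponding result from \cite{Ser}; alternatively one can argue directly by splitting off a hyperbolic plane. Fix a nonzero $\mathbf{v}\in\Q_p^n$ with $Q(\mathbf{v})=0$ and let $B$ be the symmetric bilinear form with $B(\mathbf{x},\mathbf{x})=Q(\mathbf{x})$, which exists because $\op{char}\Q_p=0$. Nonsingularity of $Q$ yields a vector on which $B(\mathbf{v},\cdot)$ does not vanish; after rescaling we obtain $\mathbf{u}$ with $B(\mathbf{v},\mathbf{u})=\tfrac12$, and after replacing $\mathbf{u}$ by $\mathbf{u}-Q(\mathbf{u})\mathbf{v}$ we may also assume $Q(\mathbf{u})=0$ (this leaves $B(\mathbf{v},\mathbf{u})$ unchanged). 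Then for $s,t\in\Q_p$,
\[
Q(s\mathbf{v}+t\mathbf{u}) = s^2Q(\mathbf{v}) + 2st\,B(\mathbf{v},\mathbf{u}) + t^2Q(\mathbf{u}) = st,
\]
so choosing $s=a$ and $t=1$ shows that $Q$ represents an arbitrary $a\in\Q_p$; that is, $Q(\Q_p^n)=\Q_p$.

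Putting the two paragraphs together gives $R(Q(\Z_p^n)) = \Q_p$. The final clause then follows from Lemma~\ref{1}: since $\Q_p$ is trivially dense in itself, the equivalence of conditions ii) and iii) there forces $R(Q(\Z^n))$ to be dense in $\Q_p$ as well. I do not expect a genuine obstacle in any of this; the only point that needs a moment's attention is the passage through the bilinear form $B$, which relies on $\op{char}\Q_p\neq 2$ and hence causes no trouble, so the whole argument reduces to the two short displays above.
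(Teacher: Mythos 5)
Your argument is correct and follows essentially the same route as the paper: both reduce via Lemma~\ref{2} to showing that a nonsingular isotropic form is universal over $\Q_p$, which the paper simply cites from Serre. The only difference is that you inline a (correct) proof of that universality by splitting off a hyperbolic plane, which makes the corollary self-contained but does not change the structure of the argument.
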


\begin{proof}
A classical result from theory of quadratic forms states that if $Q$ is isotropic over a given field, then $Q$ represents all the elements of this field (see e.g. \cite[Proposition 3', p. 33]{Ser}). Thus $R(Q(\Q_p^n))=\Q_p$ and by Lemma \ref{2} we conclude that $R(Q(\Z_p^n))=\Q_p$.
\end{proof}

Let us consider the case of anisotropic quadratic form. The first lemma, however, is valid for every quadratic form over $Q_p$ - not necessarily anisotropic and not necessarily nonsingular.

\begin{lem}\label{3}
Let $Q\in\Q_p[X_1,...,X_n]$ be any quadratic form.
\begin{enumerate}
\item[i)] The sets $Q(\Q_p^n)\bs\{0\}$ and $R(Q(\Q_p^n))\bs\{0\}$ are unions of some quadratic classes over $\Q_p$, i.e. elements of the quotient group $\Q_p^*/(\Q_p^*)^2$.
\item[ii)] The set $R(Q(\Z^n))$ is dense in $\Q_p$ if and only if $R(Q(\Q_p^n))=\Q_p$ (in other words, the set $R(Q(\Q_p^n))\bs\{0\}$ is the union of all quadratic classes over $\Q_p$).
\end{enumerate}
\end{lem}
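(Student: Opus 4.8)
The plan is to deduce both parts from the homogeneity of $Q$ of degree $2$, together with two classical facts about $p$-adic fields: the quotient group $\Q_p^*/(\Q_p^*)^2$ is finite, and $(\Q_p^*)^2$ is an open subgroup of $\Q_p^*$.

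For part i), I argue directly. Suppose $a\in Q(\Q_p^n)\bs\{0\}$, say $a=Q(\mathbf{x})$, and let $c\in(\Q_p^*)^2$, say $c=t^2$ with $t\in\Q_p^*$. Homogeneity gives $Q(t\mathbf{x})=t^2Q(\mathbf{x})=ca$, so $ca\in Q(\Q_p^n)\bs\{0\}$; hence $Q(\Q_p^n)\bs\{0\}$ is stable under multiplication by $(\Q_p^*)^2$, that is, it is a union of cosets of $(\Q_p^*)^2$ in $\Q_p^*$. Applying the same computation to the numerator of a quotient shows that if $\frac{a}{b}\in R(Q(\Q_p^n))\bs\{0\}$ with $a=Q(\mathbf{x})$ and $b=Q(\mathbf{y})\neq 0$, then $c\cdot\frac{a}{b}=\frac{Q(t\mathbf{x})}{Q(\mathbf{y})}\in R(Q(\Q_p^n))\bs\{0\}$, which gives the same conclusion for $R(Q(\Q_p^n))\bs\{0\}$.

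For part ii), the implication $\Leftarrow$ is immediate: if $R(Q(\Q_p^n))=\Q_p$ then Lemma \ref{2} gives $R(Q(\Z_p^n))=\Q_p$, which is trivially dense in $\Q_p$, and Lemma \ref{1} then yields denseness of $R(Q(\Z^n))$. For $\Rightarrow$, assume $R(Q(\Z^n))$ is dense in $\Q_p$; by Lemmas \ref{1} and \ref{2} the set $R(Q(\Q_p^n))$ is dense in $\Q_p$ as well. Put $S=R(Q(\Q_p^n))\bs\{0\}$. By part i) both $S$ and $\Q_p^*\bs S$ are unions of quadratic classes. Since there are finitely many quadratic classes, each open in $\Q_p^*$ (a coset of the open subgroup $(\Q_p^*)^2$), and $\Q_p^*$ is open in $\Q_p$, the set $\Q_p^*\bs S$ is open in $\Q_p$ and disjoint from $R(Q(\Q_p^n))$ (it misses $S$ by construction and does not contain $0$). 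Density then forces $\Q_p^*\bs S=\emptyset$, i.e.\ $S=\Q_p^*$. Finally $0\in R(Q(\Q_p^n))$, because $Q$ represents some nonzero value $b$ — not all coefficients of $Q$ vanish, so a standard basis vector, or a sum of two of them, works, using that $\Q_p$ has characteristic $0$ — and then $0=\frac{Q(\mathbf{0})}{b}$. Therefore $R(Q(\Q_p^n))=\Q_p$.

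I expect no serious obstacle here: the only non-formal ingredient is that $(\Q_p^*)^2$ is an open subgroup of finite index in $\Q_p^*$, which is classical and can be read off the explicit description of $p$-adic squares (for odd $p$, a unit is a square iff it is a quadratic residue modulo $p$; for $p=2$, iff it is $\equiv 1\pmod 8$). Everything else is bookkeeping with Lemmas \ref{1} and \ref{2} and the homogeneity observation of part i).
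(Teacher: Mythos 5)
Your proof is correct and follows essentially the same route as the paper: part i) via the homogeneity identity $Q(t\mathbf{x})=t^2Q(\mathbf{x})$, and part ii) by combining Lemmas \ref{1} and \ref{2} with the openness of quadratic classes --- you phrase the forward direction as ``the complement of $S$ in $\Q_p^*$ is open and misses a dense set, hence is empty,'' while the paper says ``a dense set meets every nonempty open quadratic class,'' but these are the same argument. Your added care in verifying $0\in R(Q(\Q_p^n))$ and in noting why $(\Q_p^*)^2$ is open is detail the paper leaves implicit, not a different approach.
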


\begin{proof}
Let us recall that if $a=Q(\mathbf{x})$ for some $\mathbf{x}\in\Q_p^n$, then for each $b\in\Q_p^*$ we have $ab^2=Q(b\mathbf{x})$. As a cosequence, if $Q$ represents some nonzero $p$-adic number $a$, then $Q$ represents every element of the class $a(\Q_p^*)^2\in\Q_p^*/(\Q_p^*)^2$. This implies that if some element of a given class in $\Q_p^*/(\Q_p^*)^2$ can be written in the form $\frac{Q(\mathbf{x})}{Q(\mathbf{y})}$, where $\mathbf{x}, \mathbf{y}\in\Q_p^n$ and $Q(\mathbf{y})\neq 0$, then every element of this class can be represented as a quotient of values of the form $Q$. This proves the part $i)$ of this lemma.

For the proof of the part $ii)$ we notice that quadratic classes over $\Q_p$ are open subsets of $\Q_p$ (see e.g. \cite[Remark 2, p. 18]{Ser}). Hence, if $R(Q(\Z^n))$ is dense in $\Q_p$, then $R(Q(\Z^n))$ intersects nonempty with any quadratic class over $\Q_p$. In the view of the part $i)$ this means that $R(Q(\Q_p^n))\bs\{0\}$ is the union of all quadratic classes over $\Q_p$. Of course, if $R(Q(\Q_p^n))=\Q_p$, then by Lemma \ref{2} we have $R(Q(\Z_p^n))=\Q_p$ and the denseness of $R(Q(\Z^n))$ in $\Q_p$ follows from the denseness of $\Z$ in $\Z_p$ and continuity of $Q$.
\end{proof}

The next part of the proof uses the most advanced tool in the whole paper. We will apply the knowledge about numbers of classes in $\Q_p^*/(\Q_p^*)^2$ represented over $\Q_p$ by a given anisotropic quadratic form. 

\begin{lem}\label{4}
Let $Q$ be a nonsingular anisotropic quadratic form over $\Q_p$. Then the set $R(Q(\Z^n))$ is dense in $\Q_p$ if and only if $n\geq 3$.
\end{lem}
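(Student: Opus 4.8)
The approach will be to use Lemma~\ref{3} to convert denseness of $R(Q(\Z^n))$ into a purely group-theoretic condition inside the finite group $G:=\Q_p^*/(\Q_p^*)^2$, and then to extract that condition from the classical description (see \cite[Ch.~IV]{Ser}) of the square classes represented over $\Q_p$ by an anisotropic quadratic form.

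First I would recall two standard facts: $G$ is elementary abelian of order $4$ when $p>2$ and of order $8$ when $p=2$; and an anisotropic form over $\Q_p$ has at most $4$ variables, so for $n\ge5$ the hypothesis of the lemma is never satisfied and I may assume $n\in\{1,2,3,4\}$. Since $Q$ is nonsingular it is not the zero polynomial, hence it takes a nonzero value on $\Q_p^n$, so $0\in R(Q(\Q_p^n))$. Let $S\subseteq G$ be the set of square classes of the nonzero values of $Q$ on $\Q_p^n$; by Lemma~\ref{3}(i) this is a union of classes, and since every element of $G$ has order $\le2$, the nonzero part of $R(Q(\Q_p^n))$ is precisely the union of the classes in $S\cdot S:=\{s_1s_2:s_1,s_2\in S\}$. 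By Lemma~\ref{3}(ii) I thus obtain
\[R(Q(\Z^n))\text{ is dense in }\Q_p\iff S\cdot S=G,\]
and the proof reduces to computing $S\cdot S$ for an anisotropic $Q$ of each rank $n\le4$.

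Now I would feed in the classification of $S$. For $n=1$ the form is $aX_1^2$, so $S=\{\overline a\}$ is a single class and $S\cdot S=\{\overline 1\}\ne G$. For $n=2$ an anisotropic $Q$ is, up to scaling by its leading coefficient, the norm form of a quadratic extension $\Q_p(\sqrt{-d})/\Q_p$, so $S$ is the coset of a proper (index $2$) subgroup $H\le G$; hence $S\cdot S=H\ne G$, using $|G|\ge4$. So for $n\le2$ the set $R(Q(\Z^n))$ is not dense. For $n=4$ an anisotropic $Q$ represents every nonzero $p$-adic number, so $S=G$ and trivially $S\cdot S=G$; and for $n=3$ an anisotropic $Q$ represents all of $\Q_p^*$ except the class of minus its discriminant, so $|G\setminus S|\le1$. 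For these last two cases I would invoke the elementary fact that a subset $S$ of a group $G$ of exponent $2$ with $|G|\ge4$ and $|G\setminus S|\le1$ satisfies $S\cdot S=G$: given $g\in G$, the two elements $g_0$ and $gg_0$, where $\{g_0\}=G\setminus S$ (take $g_0$ arbitrary if $S=G$), do not exhaust $G$ since $|G|\ge4$, so I may choose $s\in G\setminus\{g_0,gg_0\}$; then $s\in S$ and $sg\ne g_0$, hence $sg\in S$, and $s\cdot(sg)=s^2g=g$. Thus $S\cdot S=G$ and $R(Q(\Z^n))$ is dense for $n\in\{3,4\}$, which finishes all cases.

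I do not expect a real obstacle here: after the reduction to $S\cdot S=G$ the argument is just bookkeeping in $\Q_p^*/(\Q_p^*)^2$ on top of the known list of represented classes. The only place that calls for a bit of care is the rank-$2$ case, where one must observe that the represented classes form the coset of a \emph{proper} subgroup of $G$ rather than possibly all of it — which is precisely what obstructs denseness for anisotropic binary forms.
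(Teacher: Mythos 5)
Your proof is correct, and its skeleton is the same as the paper's: reduce via Lemma~\ref{3} to deciding whether the set $S$ of represented square classes satisfies $S\cdot S=G$ in $G=\Q_p^*/(\Q_p^*)^2$, then feed in Serre's data on which classes an anisotropic form of each rank represents. The cases $n=1$ and $n\geq 3$ match the paper almost verbatim (the paper phrases the $n\geq 3$ step as ``$S$ and $aS$ each have more than half the elements of $G$, hence meet,'' which is the same pigeonhole as your $|G\setminus S|\leq 1$ argument). Where you genuinely diverge is $n=2$: the paper only uses the \emph{count} of represented classes ($|S|=|G|/2$) and therefore, for $p=2$, has to run through four possible configurations of a $4$-element subset of the order-$8$ group and check in each that $S\cdot S\neq G$; you instead invoke the structural fact that $S$ is a coset of the index-$2$ norm subgroup $H$ of the quadratic extension attached to $Q$, whence $S\cdot S=H\neq G$ in one line. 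Your version is shorter and explains \emph{why} the binary case fails (it also sidesteps the paper's configurations that cannot actually occur for a binary form), at the cost of importing one more piece of the theory of norm forms; the paper's version needs only the cardinality of $S$. Your observation that $n\geq 5$ is vacuous for anisotropic forms is a harmless extra. Minor nitpicks only: the sign/normalization in ``norm form of $\Q_p(\sqrt{-d})$'' should be stated carefully, and you should note (as you do) that anisotropy forces the extension to be a genuine quadratic one so that $H$ is proper.
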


\begin{proof}
Let us start with the case of $n=1$. Then $Q=aX_1^2$ for some $a\in\Q_p^*$ and clearly $R(Q(\Q_p))=\{b^2/c^2: b,c\in\Q_P, b\neq 0\}=(\Q_p^*)^2\cup\{0\}$. Thus, $R(Q(\Z^n))$ cannot be dense in $\Q_p$.

In the sequel we will use the fact that the order of the group $\Q_2^*/(\Q_2^*)^2$ is $4$ for $p>2$ and $8$ for $p=2$ (see \cite[Corollaries on p. 18]{Ser}).

Consider the case of $n=2$. Since $\Q_p^*/(\Q_p^*)^2$ is a $2$-torsion group, we can treat it as a vector space over the field with two elements $\F_2$. If $p>2$, then $Q$ represents two quadratic classes over $\Q_p$ (see[Remark 1, p. 38]{Ser}). Let $a$ and $b$ be their representatives. Then $R(Q(\Q_p^2))$ contains only classes represented by $1$ and $ab$. If $p=2$, then $Q$ represents two quadratic classes over $\Q_p$ (see[Remark 1, p. 38]{Ser}). Consider several cases.
\begin{enumerate}
\item When the represented classes form a hyperplane in $\Q_2^*/(\Q_2^*)^2$, then they are exactly those classes contained in $R(Q(\Q_2^2))$.
\item When the represented classes have representants $1,a,b,c$, where $a,b,c$ are linearly independent in $\Q_2^*/(\Q_2^*)^2$, then $R(Q(\Q_2^2))$ does not contain the class of $abc$.
\item When the represented classes have representants $a,b,c,ab$, where $a,b,c$ are linearly independent in $\Q_2^*/(\Q_2^*)^2$, then $R(Q(\Q_2^2))$ does not contain the class of $c$.
\item When the represented classes have representants $a,b,c,abc$, where $a,b,c$ are linearly independent in $\Q_2^*/(\Q_2^*)^2$, then $R(Q(\Q_2^2))$ does not contain the classes of $a,b,c$ and $abc$.

\end{enumerate}

We end the proof with the case of $n\geq 3$. Then $Q$ represents more than a half of quadratic classes over $\Q_p$ (see[Remark 1, p. 38]{Ser}). Let $a\in\Q_p^*$ be arbitrary. The sets
$$\{b(\Q_p^*)^2: b\in\Q_p*, b \text{ is represented by } Q \text{ over } \Q_p\},$$
$$\{ac(\Q_p^*)^2: c\in\Q_p*, c \text{ is represented by } Q \text{ over } \Q_p\}$$
have cardilnality greater than a half of the order of $\Q_p^*/(\Q_p^*)^2$, thus they intersect nonempty. This means, that $ac_0(\Q_p^*)^2=b_0(\Q_p^*)^2$ for some $b_0,c_0\in\Q_p^*$ represented by $Q$ over $\Q_p$. Consequently, $a(\Q_p^*)^2=\frac{b_0}{c_0}(\Q_p^*)^2\subset R(Q(\Q_p^n))$. Since $a\in\Q_p^*$ is arbitrary, we showed that $R(Q(\Q_p^n))$ contains all the quadratic classes over $\Q_p$.

Summing up uor reasonings, from Lemma \ref{3} ii) we conclude that if $Q$ is a nonsingular anisotropic quadratic form over $\Q_p$, then $R(Q(\Z^n))$ is dense in $\Q_p$ if and only if $n\geq 3$.
\end{proof}

If $n=1$, then a nonsingular quadratic form $Q\in\Q_p[X_1]$ is anisotropic and by Lemma \ref{4} we have that $R(Q(\Z^n))$ is dense in $\Q_p$. If $n\geq 3$, then it follows from Corollary \ref{cor} and Lemma \ref{4} that $R(Q(\Z^n))$ is dense in $\Q_p$ for any nonsingular quadratic form $Q\in\Q_p[X_1,...,X_n]$. Hence, it remains to check when a nonsingular binary quadratic form is isotropic. However, it is a very well known (and easy to prove) fact that a nonsingular binary quadratic form $Q=aX_1^2+bX_1X_2+cX_2^2\in\Q_p[X]$ is isotropic if and only if $b^2-4ac=d^2$ for some $d\in\Q_p^*$, which holds exactly when $2\mid\nu_p(b^2-ac)$ and
\begin{itemize}
\item $\left(\frac{b^2-4ac}{p}\right)=1$ for $p>2$,
\item $b^2-4ac\equiv 1\pmod{8}$ for $p=2$.
\end{itemize}

The proof of Theorem \ref{main} is finished.

\begin{rem}
\rm{As we can see from the proof of Theorem \ref{main}, the assumption that coefficients of quadratic form $Q$ are integer is not necessary. The statement of the theorem remains true for $Q\in\Q_p[X_1,...,X_n]$.}
\end{rem}

\section{Quotients of nonnegative integers represented by a fixed quadratic form}

Characterization of pairs $(Q,p)$, where $Q\in\Z[X_1,...,X_n]$ is a nonsingular quadratic form and $p$ is a prime number such that $R(Q(\Z^n))$ is dense in $\Q_p$, was motivated by a problem posed in \cite{GHLPSSS}. However, the problem (\cite[Problem 4.4]{GHLPSSS}) asks about the denseness of the set of quotients of nonnegative integers represented by a given quadratic form in the field of $p$-adic numbers. As we will see, this problem can be easily solved if we have Theorem \ref{main}.

\begin{thm}\label{positive}
Let $p$ be a prime number, $n$ be a positive integer and $Q\in\Z[X_1,...,X_n]$ be a quadratic form. Then the set $R(\N\cap Q(\Z^n))$ ($R(\N\cap Q(\N^n))$, respectively) is dense in $\Q_p$ if and only if $R(Q(\Z^n))$ ($R(Q(\N^n))$, respectively) is dense in $\Q_p$ and $\N_+\cap Q(\Z^n)\neq\varnothing$ ($\N_+\cap Q(\N^n)\neq\varnothing$, respectively).
\end{thm}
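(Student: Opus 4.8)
The plan is to prove the two statements ($\Z$ and $\N$ versions) in parallel, since the arguments are formally identical once we work with the appropriate domain $D\in\{\Z,\N\}$ and note that $R(D\cap Q(D^n))\subseteq R(Q(D^n))$.

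First I would dispatch the easy direction. If $R(D\cap Q(D^n))$ is dense in $\Q_p$, then it is in particular nonempty and contains a nonzero element, so $\N_+\cap Q(D^n)\neq\varnothing$; moreover $R(D\cap Q(D^n))\subseteq R(Q(D^n))$, so the latter is dense in $\Q_p$ as well. This gives the ``only if'' part immediately.

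For the ``if'' direction, assume $R(Q(D^n))$ is dense in $\Q_p$ and fix some $m\in\N_+\cap Q(D^n)$, say $m=Q(\mathbf{v})$ with $\mathbf{v}\in D^n$. The key observation is that $Q$ is homogeneous of degree $2$, so for any $k\in\N_+$ we have $Q(k\mathbf{v})=k^2 m$, hence $k^2m\in\N_+\cap Q(D^n)$ for every positive integer $k$; in particular the set $S:=\N_+\cap Q(D^n)$ contains the infinite multiplicative family $\{k^2m:k\in\N_+\}$. Now take any target $\alpha\in\Q_p^*$ and any $\eps>0$. By denseness of $R(Q(D^n))$ there exist $a,b\in Q(D^n)$ with $b\neq 0$ and $\|a/b-\alpha\|_p<\eps$; write $a=Q(\mathbf{x})$, $b=Q(\mathbf{y})$. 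Using homogeneity again, $a m = Q(\mathbf{x})Q(\mathbf{v})$ need not itself be a value of $Q$, so the naive trick of multiplying numerator and denominator by $m$ does not obviously land back in $Q(D^n)$; instead I would multiply \emph{both} $a$ and $b$ by the same square: pick $k$ with $k^2 a\in\N$ is automatic (the issue is sign), and observe $\dfrac{k^2 a}{k^2 b}=\dfrac{a}{b}$, while $k^2a=Q(k\mathbf{x})$ and $k^2b=Q(k\mathbf{y})$ lie in $Q(D^n)$. So the real content is the \emph{sign} problem: we need $a$ and $b$ to be simultaneously positive (equivalently, in $D$ when $D=\N$, or just nonzero integers of the right sign). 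The fix is to note that if $a,b$ are both negative we may replace the pair $(\mathbf{x},\mathbf{y})$ by itself (the quotient is unchanged and already positive is \emph{not} guaranteed)—so we genuinely need to produce, near $\alpha$, a quotient $a/b$ with $a,b$ of the \emph{same} sign. This is where $m\in S$ enters: the quotient $\alpha$ is $p$-adically approximated by $a/b=(a/b)\cdot(m^2/m^2)$, and since $b^2 m^2>0$ and $a b m^2$ has the sign of $ab$, one rewrites $\dfrac ab=\dfrac{a\,b\,m^2}{b^2 m^2}=\dfrac{Q(\text{something})}{Q(\text{something})}$ only if $ab m^2$ is itself represented—again not automatic.

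The clean route, and the one I expect to be the actual argument, avoids this by going through $\Q_p$-representability directly. Recall from Lemma~\ref{3} that denseness of $R(Q(\Z^n))$ is equivalent to $R(Q(\Q_p^n))=\Q_p$, i.e.\ $Q$ represents (over $\Q_p$) enough quadratic classes that their pairwise quotients exhaust $\Q_p^*/(\Q_p^*)^2$. Since $m\in S$ with $m>0$, the form $Q$ over $\Z$ represents the positive integer $m$, hence $Q\cdot m$—no: rather, consider the quadratic form $Q':=m\cdot Q$ in the sense of scaling values, whose value set over $D^n$ is $m\cdot Q(D^n)$; its values are $m Q(\mathbf{x})$, and $m Q(\mathbf{x})=Q(\mathbf{v})Q(\mathbf{x})$ which by the standard ``product of two represented values'' identity for \emph{binary} forms... is again special to $n=2$. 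So for $n\ge 3$ (isotropic case, Corollary~\ref{cor}) we already have $R(Q(\Z_p^n))=\Q_p$ with everything representable, and since any $\alpha\in\Q_p$ is a quotient of two values, and by homogeneity we may assume both values are positive integers (scale by a large square to clear denominators and, crucially, in the isotropic case $Q$ represents \emph{all} of $\Z$, in particular arbitrarily large positive integers, so one picks a positive representation of the numerator and, independently, of the denominator close to $\alpha$); and for $n=2$ in the dense case, $Q$ is isotropic over $\Q_p$, so the same applies. Thus the main obstacle—and the one step I would write out carefully—is verifying that in every case where $R(Q(\Z^n))$ is dense, one can approximate an arbitrary $\alpha\in\Q_p$ by $a/b$ with $a,b\in\N_+\cap Q(D^n)$: this reduces, via the homogeneity scaling $\mathbf{x}\mapsto k\mathbf{x}$, to showing $Q$ represents arbitrarily large positive integers in $D^n$ with prescribed $p$-adic behaviour, which follows from density of $R(Q(D^n))$ together with the existence of the single positive value $m$ (multiply an approximating pair by $m^2$ and absorb the sign of $ab$ by noting $a/b=(am)/(bm)$ fails but $a/b = (a k^2)/(b k^2)$ with $k$ chosen so large that, combined with a positive value $m$ realized by the form, one can adjust... )—in short, the crux is the bookkeeping that turns ``dense quotients of values'' plus ``one positive value'' into ``dense quotients of \emph{positive} values,'' and I expect it to follow by a short scaling argument once the isotropy/anisotropy case split of Theorem~\ref{main} is invoked.
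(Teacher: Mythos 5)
There is a genuine gap: your ``if'' direction never closes. You correctly identify the crux --- one must approximate an arbitrary value $Q(\mathbf{x})$ (possibly negative) $p$-adically by \emph{positive} integer values of $Q$ --- but every device you try is multiplicative, and multiplicative tricks cannot work here. Replacing $\mathbf{x}$ by $k\mathbf{x}$ multiplies the value by $k^2$, which changes the quotient not at all and, more to the point, preserves the sign of each value, so it can never convert a negative value into a positive one; you notice this yourself and then drift into the $m^2$-multiplication and ``product of represented values'' ideas, each of which you also (correctly) abandon. Your final fallback is wrong: isotropy of $Q$ over $\Q_p$ gives $R(Q(\Q_p^n))=\Q_p$, i.e.\ representability over $\Q_p$, but it does \emph{not} imply that the integral form $Q$ represents all of $\Z$, or even arbitrarily large positive integers, over $\Z^n$ (the theorem explicitly allows $\N_+\cap Q(\Z^n)=\varnothing$, e.g.\ for negative definite forms, and the positivity hypothesis is exactly what must be exploited). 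The proposal ends with ``I expect it to follow by a short scaling argument,'' which is an acknowledgement that the key step is missing rather than a proof of it.

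The missing idea is an \emph{additive}, not multiplicative, perturbation. Fix $\mathbf{z}\in\Z^n$ with $m=Q(\mathbf{z})\in\N_+$ and, for arbitrary $\mathbf{x}\in\Z^n$, consider $Q(\mathbf{x}+p^k\mathbf{z})=Q(\mathbf{x})+2p^kB(\mathbf{x},\mathbf{z})+p^{2k}Q(\mathbf{z})$, where $B$ is the bilinear form associated with $Q$. As a polynomial in $p^k$ this has positive leading coefficient $Q(\mathbf{z})$, so the value is a positive integer for all large $k$; at the same time the difference $Q(\mathbf{x}+p^k\mathbf{z})-Q(\mathbf{x})$ is divisible by $p^k$, so its $p$-adic valuation tends to infinity with $k$. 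Hence every element of $Q(\Z^n)$ is a $p$-adic limit of elements of $\N_+\cap Q(\Z^n)$, so $R(Q(\Z^n))$ lies in the closure of $R(\N\cap Q(\Z^n))$, and density of the former forces density of the latter. This single translation trick handles both the $\Z^n$ and $\N^n$ versions and requires no case split by isotropy, no appeal to Theorem~\ref{main}, and no control over which integers $Q$ represents.
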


\begin{proof}
Obviously, if $\N\cap Q(\Z^n)=\varnothing$ or $R(Q(\Z^n))$ is not dense in $\Q_p$, then $R(\N\cap Q(\Z^n))$ is not dense in $\Q_p$. Assume now that $\N\cap Q(\Z^n)\neq\varnothing$ or $R(Q(\Z^n))$ is dense in $\Q_p$. Let $\mathbf{z}\in\Z^n$ be such that $Q(\mathbf{z})\in\N_+$. Let $\mathbf{x}\in\Z^n$ be arbitrary. Then, for sufficiently large $k\in\N$ we have $Q(\mathbf{x}+p^k\mathbf{z})\in\N_+$. This follows from the equation 
$$Q(\mathbf{x}+p^k\mathbf{z})=Q(\mathbf{x})+2p^kB(\mathbf{x},\mathbf{z})+p^{2k}Q(\mathbf{z})>0,$$
where $B$ is a symmetric bilinear form associated with $Q$. The right hand side of the above equation is a quadratic trinomial with respect to $p^k$ with positive leading coefficient, thus it is positive for large $k$. On the other hand,
\begin{align*}
&\nu_p(Q(\mathbf{x}+p^k\mathbf{z})-Q(\mathbf{x}))=\nu_p(p^k(2B(\mathbf{x},\mathbf{z})+p^kQ(\mathbf{z})))=k+\nu_p(2B(\mathbf{x},\mathbf{z})+p^kQ(\mathbf{z}))\\
&=k+\nu_p(2B(\mathbf{x},\mathbf{z}))
\end{align*}
for sufficiently large $k\in\N$. Hence, every element of $Q(\Z^n)$ can be approximated $p$-adically by some elements from $\N\cap Q(\Z^n)$. As a result, the set $R(Q(\Z^n))$ is contained in the set of accumulation points of the set $R(\N\cap Q(\Z^n))$. However, $R(Q(\Z^n))$ is dense in $\Q_p$, which means that also $R(\N\cap Q(\Z^n))$ is dense in $\Q_p$.

The proof for the set $R(\N\cap Q(\N^n))$ is completely analogous.
\end{proof}

\begin{rem}
\rm{It is possible that $\N_+\cap Q(\Z^n)\neq\varnothing$ and $\N_+\cap Q(\N^n)=\varnothing$ for some quadratic form $Q\in\Z[X_1,...,X_n]$. Consider $Q=-X_1X_2$. Then $\N_+\cap Q(\N^n)=\varnothing$, as $Q$ attains only negative values for positive values of variables $X_1$, $X_2$. On the other hand, $\N_+\cap Q(\Z^n)\neq\varnothing$ and, since $Q$ is a nonsingular isotropic quadratic form over $\Q$, we have by Corollary \ref{cor} and Theorem \ref{positive} that $R(\N_+\cap Q(\Z^n))$ is dense in $\Q_p$ for any prime number $p$.}
\end{rem}

\begin{rem}
\rm{The assertion of Theorem \ref{positive} remains true if we assume that $Q$ is a quadratic form with rational coefficients. It suffices to note, that if $Q(\mathbf{x})=\frac{a}{b}$ and $Q(\mathbf{y})=\frac{c}{d}$ for some $\mathbf{x},\mathbf{y}\in\Z^n$, then $Q(bd\mathbf{x})=abd^2$ and $Q(bd\mathbf{y})=cb^2d$ and $Q(bd\mathbf{x})/Q(bd\mathbf{y})=Q(\mathbf{x})/Q(\mathbf{y})$.}
\end{rem}


\begin{thebibliography}{99}
\bibitem{BDGLS} B. Brown, M. Dairyko, S. R. Garcia, B. Lutz, M. Someck, \textit{Four quotient set gems}, Amer. Math. Monthly 121 (7) (2014) 590–599.
\bibitem{BEST} J. Bukor, P. Erdős, T. Šalát, J. T. Tóth, \textit{Remarks on the (R)-density of sets of numbers, II}, Math. Slovaca 47 (5) (1997) 517–526.
\bibitem{BukSalToth} J. Bukor, T. Šalát, J. T. Tóth, \textit{Remarks on R-density of sets of numbers}, in: Number Theory, Liptovský Ján, 1995, Tatra Mt. Math. Publ. 11 (1997) 159–165.
\bibitem{BukCsi} J. Bukor, P. Csiba, \textit{On estimations of dispersion of ratio block sequences}, Math. Slovaca 59 (3) (2009) 283–290.
\bibitem{BukToth} J. Bukor, J. T. Tóth, \textit{On accumulation points of ratio sets of positive integers}, Amer. Math. Monthly 103 (6) (1996) 502–504.
\bibitem{DonGarRou} Ch. Donnay, S. R. Garcia, J. Rouse, \textit{$p$-adic quotient sets II: Quadratic forms}, J. Number Theory 201 (2019) 23-39.
\bibitem{GHLPSSS}  S. R. Garcia, Y. X. Hong, F. Luca, E. Pinsker, C. Sanna, E. Schechter, A. Starr, \textit{$p$-adic quotient sets}, Acta Arith. 179 (2) (2017) 163–184.
\bibitem{GarLuc} S. R. Garcia, Florian Luca, \textit{Quotients of Fibonacci numbers}, Amer. Math. Monthly 123 (10) (2016) 1039–1044.
\bibitem{GPS-JS} S. R. Garcia, D. E. Poore, Vincent Selhorst-Jones, Noah Simon, \textit{Quotient sets and Diophantine equations}, Amer. Math. Monthly 118 (8) (2011) 704–711.
\bibitem{HedRose} Sh. Hedman, D. Rose, \textit{Light subsets of N with dense quotient sets}, Amer. Math. Monthly 116 (7) (2009) 635–641.
\bibitem{HobSil} D. Hobby, D. M. Silberger, \textit{Quotients of primes}, Amer. Math. Monthly 100 (1) (1993) 50–52.
\bibitem{MisMurSan} P. Miska, N. Murru, C. Sanna, \textit{On the $p$-adic denseness of the quotient set of a polynomial image}, J. Number Theory 197 (2019) 218–227.
%\bibitem{MisSan}  Piotr Miska, Carlo Sanna, \textit{$p$-adic denseness of members of partitions of $\N$ and their ratio sets}, Bull. Malays. Math. Sci. Soc. (2019) 7pp., published on-line 21 January 2019.
\bibitem{Sal} T. Šalát, \textit{On ratio sets of natural numbers}, Acta Arith. 15 (1968/1969) 273–278.
\bibitem{Sal2} T. Šalát, \textit{Corrigendum to the paper “On ratio sets of natural numbers”}, Acta Arith. 16 (1969/1970) 103.
\bibitem{San} C. Sanna, \textit{The quotient set of $k$-generalized Fibonacci numbers is dense in $\Q_p$}, Bull. Aust. Math. Soc. 96 (1) (2017) 24–29.
\bibitem{Ser} J.-P. Serre, \textit{A Course in Arithmetic}, translated from the French, Graduate Texts in Mathematics, vol. 7, Springer-Verlag, New York-Heidelberg, 1973.
\bibitem{Sta} P. Starni, \textit{Answers to two questions concerning quotients of primes}, Amer. Math. Monthly 102 (4) (1995) 347–349.
\bibitem{StraToth} O. Strauch, J. T. Tóth, \textit{Asymptotic density of $A\subset\N$ and density of the ratio set $R(A)$}, Acta Arith. 87 (1) (1998) 67–78.
\bibitem{StraToth2} O. Strauch, J. T. Tóth, \textit{Corrigendum to Theorem 5 of the paper: “Asymptotic density of $A\subset\N$ and density of the ratio set $R(A)$”, Acta Arith. 87 (1) (1998) 67–78}, Acta Arith. 103 (2) (2002) 191–200.
\end{thebibliography}
\end{document}